\newtheorem{lemma}{Lemma}[section]
\newtheorem{lemma*}{Lemma}
\newtheorem{theorem}[lemma]{Theorem}
\newtheorem{prop}[lemma]{Proposition}
\newtheorem{cor}[lemma]{Corollary}
\newtheorem{claim*}{Claim}
\newtheorem{defn}[lemma]{Definition}
\theoremstyle{definition}
\newtheorem*{lem}{Acknowledgements}
\newtheorem{rmk}[lemma]{Remark}
\theoremstyle{plain}
    \newtheoremstyle{TheoremNum}
        {\topsep}{\topsep}              %%% space between body and thm
        {\itshape}                      %%% Thm body font
        {}                              %%% Indent amount (empty = no indent)
        {\bfseries}                     %%% Thm head font
        {.}                             %%% Punctuation after thm head
        { }                             %%% Space after thm head
        {\thmname{#1}\thmnote{ \bfseries #3}}%%% Thm head spec
    \theoremstyle{TheoremNum}
\newcommand{\Q}{{\mathbb Q}}
\newcommand{\R}{{\mathbb R}}
\newcommand{\Z}{{\mathbb Z}}
\numberwithin{equation}{section}
\numberwithin{table}{section}
\title{Profinite rigidity and hyperbolic four-punctured sphere bundles over the circle}
\author{Tamunonye Cheetham-West}
\date{Fall 2024}
\address{Department of Mathematics \\ Yale University \\ New Haven, CT, 06511}
  \email{tamunonye.cheetham-west@yale.edu}
\begin{document}

\maketitle
\pagestyle{plain}
\begin{abstract}
    We show that hyperbolic four-punctured $S^2-$bundles over $S^1$ are distinguished by the finite quotients of their fundamental groups among all 3-manifold groups. To do this, we upgrade a result of Liu to show that the topological type of a fiber is detected by the profinite completion of the fundamental group of a fibered hyperbolic 3-manifold. 
\end{abstract}
\bibliographystyle{alpha}

\section{Introduction}
\noindent For a compact, connected 3-manifold $M$, it is interesting to understand what properties of $M$ are detected by the profinite completion $\widehat{\pi_1(M)}$; i.e. the inverse limit of finite quotients of $\pi_1(M)$. If $M$ is the only compact 3-manifold with profinite completion $\widehat{\pi_1(M)}$, we say that $\pi_1(M)$ is {\it profinitely rigid among 3-manifold groups}. 
\medbreak For example, if two compact 3-manifolds $M$ and $N$ have $\widehat{\pi_1(M)}\cong\widehat{\pi_1(N)}$, it is a consequence of work of Lott and L{\"u}ck \cite{Luck1995} \cite{LuckApprox} on the first $L^2-$Betti number of compact 3-manifolds that $M$ is irreducible if and only if $N$ is. Furthermore, for an irreducible manifold, Wilton and Zalesskii \cite{WZ1} prove that the profinite completion determines whether a manifold is finite-volume hyperbolic. Remarkably, Liu \cite{Y} showed that any set of finite-volume hyperbolic 3-manifolds whose fundamental groups have a fixed common profinite completion is always finite. 
\medbreak Throughout, $\Sigma_{g,p}$ will refer to a genus $g$ surface with $p$ punctures. Bridson, Reid, and Wilton \cite{BRW} show that whenever the Mapping Class Group $Mod(\Sigma_{g,p})$ of a surface $\Sigma_{g,p}$ is {\it omnipotent} and has the {\it Congruence Subgroup Property} (see Section~\ref{modS} for definitions), hyperbolic $\Sigma_{g,p}$ bundles over $S^1$ with first Betti number 1 with the same finite quotients have a common finite-sheeted cyclic cover. In particular, they show that all hyperbolic $\Sigma_{1,1}$ bundles are profinitely rigid among 3-manifolds. We combine this circle of ideas with the work of Liu \cite{Y} (see Section~\ref{sec:liu} for a description) to prove:
\begin{theorem}\label{profiniterig}
Let $M$ be a hyperbolic $\Sigma_{0,4}$ bundle over $S^1$. Then $\pi_1(M)$ is profinitely rigid among 3-manifold groups. 
\end{theorem}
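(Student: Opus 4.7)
The plan is to take an arbitrary compact 3-manifold $N$ with $\widehat{\pi_1(N)} \cong \widehat{\pi_1(M)}$ and force $N \cong M$ through a cascade of profinite-invariant reductions. First, the classical invariants handle the preliminaries: Lott--L\"uck's formula for the first $L^2$-Betti number forces $N$ to be irreducible; Wilton--Zalesskii then forces $N$ to be finite-volume hyperbolic; and the profinite detection of being fibered (via Jaikin-Zapirain, and as a consequence of Liu's work) forces $N$ to be a surface bundle over $S^1$.

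Second, I would apply the upgrade of Liu's theorem promised in the abstract to conclude that the fiber of $N$ is homeomorphic to $\Sigma_{0,4}$. This is the crucial new ingredient: Liu's original theorem only produces a finite list of fibered hyperbolic 3-manifolds sharing a profinite completion, and upgrading the conclusion so that all of them share a fiber type is a genuinely stronger statement, and presumably the technical heart of the paper.

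Third, with both $M$ and $N$ realized as hyperbolic $\Sigma_{0,4}$-bundles, I would write $\pi_1(M) = \pi_1(\Sigma_{0,4}) \rtimes_\phi \Z$ and $\pi_1(N) = \pi_1(\Sigma_{0,4}) \rtimes_\psi \Z$ and aim to show that $\phi, \psi \in \operatorname{Mod}(\Sigma_{0,4})$ are conjugate, which by Dehn--Nielsen--Baer would yield $M \cong N$. The profinite isomorphism, together with a canonical identification of the fiber subgroup inside $\widehat{\pi_1(M)}$ (immediate when $b_1(M) = 1$, and otherwise extracted via Thurston norm arguments and the fact that the fiber type is fixed), produces a profinite conjugation between $\phi$ and $\psi$. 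One then invokes the Congruence Subgroup Property of $\operatorname{Mod}(\Sigma_{0,4})$, which holds through the identification of $\operatorname{PMod}(\Sigma_{0,4})$ with a finite-index subgroup of $\mathrm{PSL}_2(\Z)$, combined with omnipotence of $\operatorname{Mod}(\Sigma_{0,4})$, exactly in the BRW framework, to promote the profinite conjugation to genuine conjugation inside $\operatorname{Mod}(\Sigma_{0,4})$.

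The hardest step will be the fiber-detection upgrade of Liu's theorem; detecting the topological type of the fiber from purely profinite data goes meaningfully beyond Liu's finiteness statement and will require new profinite invariants coming from the fibered geometry. A secondary technical difficulty is the case where the monodromy is pure, so $b_1(M) \geq 2$ and the BRW theorem as quoted does not apply directly; however, since $\operatorname{PMod}(\Sigma_{0,4})$ is essentially a free group and therefore conjugacy separable, the residual arguments can be carried out inside the fiber subgroup rather than inside the ambient mapping class group, which should suffice to close this case.
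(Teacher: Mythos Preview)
Your proposal is correct and follows the paper's route: reduce $N$ to a hyperbolic $\Sigma_{0,4}$-bundle via Wilton--Zalesskii, Jaikin-Zapirain, Liu, and Theorem~\ref{toptype}, then use CSP and omnipotence of $\operatorname{Mod}(\Sigma_{0,4})$ in the Bridson--Reid--Wilton framework (Lemma~\ref{LiuplusBRW}) to force the monodromies to be conjugate. Your worry about the $b_1 \geq 2$ case is unnecessary, since Liu's Theorem~\ref{liuthm2} already aligns the profinite closures of the fiber subgroups regardless of $b_1$; the one step you do not anticipate is that passing from a common conjugate power of the monodromies to genuine conjugacy uses the Bass--Serre tree for the virtually free group $\operatorname{Mod}(\Sigma_{0,4})$, not conjugacy separability of the fiber group.
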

In particular, the work of Liu \cite{Y} allows us to remove the first Betti number 1 condition. 
\begin{theorem}\label{toptype}
 Let $M$ and $N$ be finite-volume hyperbolic manifolds with $\widehat{\pi_1(M)}\cong\widehat{\pi_1(N)}$. Liu's Thurston-norm and fiber class preserving isomorphism $H^1(N,\Z)\to H^1(M,\Z)$ induced by an isomorphism $\Phi:\widehat{\pi_1(M)}\to\widehat{\pi_1(N)}$ sends fibered classes to fibered classes where the corresponding fiber surfaces have the same topological type.
\end{theorem}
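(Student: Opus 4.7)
The plan is to combine Liu's theorem with profinite rigidity of the peripheral structure. Liu's theorem (see Section~\ref{sec:liu}) already produces a Thurston-norm preserving, fibered-class preserving $\Z$-linear isomorphism $\Phi^* : H^1(N,\Z) \to H^1(M,\Z)$. Since $\Phi^*$ is $\Z$-linear it preserves primitivity, so it suffices to treat a primitive fibered class $\phi \in H^1(N,\Z)$ with connected fiber $F$, corresponding to $\phi' = \Phi^*(\phi)$ with connected fiber $F'$. Thurston-norm preservation gives $\chi(F) = -\|\phi\|_T = -\|\phi'\|_T = \chi(F')$. A connected orientable surface is determined up to homeomorphism by its Euler characteristic together with the number of boundary components, so it is enough to verify that the boundary-component counts also agree. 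The closed case is then immediate, and I would focus on the cusped case.

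For cusped $M$ and $N$, I would invoke the Wilton--Zalesskii rigidity of the peripheral structure: $\Phi$ induces a bijection between the cusps of $M$ and those of $N$ such that, up to conjugacy, $\Phi$ restricts to a continuous isomorphism $\widehat{\pi_1(T^N)} \to \widehat{\pi_1(T^M)}$ on each pair of corresponding peripheral $\Z^2$-subgroups. The geometric bridge is the standard fact that, for a fibered class $\phi$ on a cusped 3-manifold and a cusp torus $T$, the fiber meets $T$ transversely in $d(\phi, T)$ parallel essential simple closed curves, where $d(\phi, T)$ is the divisibility of $\phi|_T \in H^1(T,\Z) \cong \Z^2$. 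Hence the total number of boundary components of $F$ is $\sum_T d(\phi, T)$, and similarly for $F'$.

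To match these sums cusp-by-cusp I would argue that the divisibility of $\phi|_T$ is a profinite invariant. Two elementary facts suffice: the divisibility of an integral class viewed inside $\widehat{\Z}^2$ equals its $\Z$-divisibility, and any continuous $\widehat{\Z}$-module automorphism of $\widehat{\Z}^2$ preserves divisibility. Provided that Liu's integer-coefficient isomorphism $\Phi^*$ is the restriction to the integral lattice of the $\widehat{\Z}$-coefficient isomorphism $H^1(N, \widehat{\Z}) \to H^1(M,\widehat{\Z})$ induced by $\Phi$ via the identification with continuous cohomology of the profinite completion, naturality of restriction to peripheral subgroups together with the Wilton--Zalesskii bijection yield $d(\phi, T^N) = d(\phi', T^M)$ for each pair of corresponding cusps.

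I expect the main obstacle to be precisely this compatibility of Liu's $\Z$-coefficient isomorphism with the peripheral structure through $\widehat{\Z}$-cohomology: one has to unwind Liu's construction of $\Phi^*$ far enough to recognize it as inherited from the continuous-cohomology isomorphism induced by $\Phi$ on profinite completions, and then see that restriction to peripheral subgroups is intertwined with the Wilton--Zalesskii cusp bijection. Once this compatibility is in hand, summing divisibilities across corresponding cusps gives $b(F) = b(F')$, and combined with $\chi(F) = \chi(F')$ this forces $F \cong F'$, establishing the claim.
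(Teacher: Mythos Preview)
Your approach is sound and takes a genuinely different route from the paper. The paper argues dynamically: it replaces the monodromies $\varphi,\psi$ by powers fixing every puncture, proves (Proposition~\ref{keyprop}) that for a pseudo-Anosov $f$ on $\Sigma$ the number of punctures equals the number of primitive conjugacy classes in $\widehat{\pi_1(\Sigma)}$ fixed by $\hat f$ (using that non-procyclic closed abelian subgroups of $\widehat{\pi_1(M_f)}$ are peripheral, via \cite{WZ1}), and then observes that $\Phi$ transports fixed conjugacy classes bijectively, giving $n_M=n_N$. Your argument bypasses the monodromy entirely, reading the boundary-component count as the sum of divisibilities of the fibered class on the peripheral tori and matching these through the Wilton--Zalesskii cusp bijection plus naturality of restriction in cohomology. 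One point to adjust: by Theorem~\ref{liuthm}, Liu's integral isomorphism $h^*$ is not literally the restriction to the integral lattice of the $\widehat{\Z}$-cohomology isomorphism induced by $\Phi$, but differs from it by the scalar unit $\mu\in\widehat{\Z}^\times$; since you already noted that unit multiplication on $\widehat{\Z}^2$ preserves divisibility, your conclusion survives unchanged once you carry this $\mu$ through the peripheral diagram. The paper's route has the dividend of producing Corollary~\ref{profinitepA}, a profinite characterisation of pseudo-Anosovs; yours is more direct and purely cohomological, and avoids any passage to powers of the monodromy.
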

Theorem~\ref{profiniterig} gives many different examples of hyperbolic 3-manifolds that are distinguished from every other compact 3-manifold. One notable example is the complement of the three-component chain link in $S^3$ (shown in Figure~\ref{fig:magic}). This manifold is also called the {\it magic} manifold \cite{GordonWu} and is denoted as $6^3_1$ in \cite{rolfsen2003knots}. This follows immediately from Theorem~\ref{profiniterig} and Lemma 2.6(1)\cite{EikoKin} which proves the magic manifold is a $\Sigma_{0,4}-$bundle.

\begin{figure}
    \centering
    \includegraphics[scale=0.25]{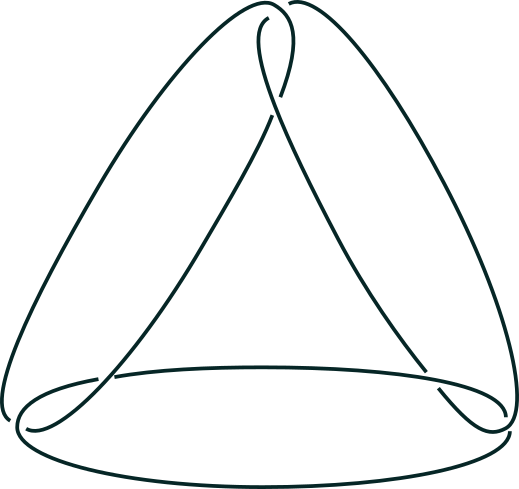}
    \caption{The magic manifold $6_1^3$}
    \label{fig:magic}
\end{figure}

\begin{cor}\label{magic}
    The magic manifold has profinitely rigid fundamental group among 3-manifold groups.
\end{cor}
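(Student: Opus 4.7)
The plan is to verify directly that the magic manifold satisfies the hypotheses of Theorem~\ref{profiniterig}, namely that it is a hyperbolic $\Sigma_{0,4}$-bundle over $S^1$, after which the conclusion is immediate.

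First, I would recall that the magic manifold $M = S^3 \setminus 6^3_1$ is a well-known finite-volume hyperbolic 3-manifold; this can be verified by producing an explicit ideal triangulation (e.g.\ with SnapPy) or by citing the standard references where its hyperbolic structure is worked out. This takes care of the hyperbolicity hypothesis.

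Second, I would invoke Lemma~2.6(1) of \cite{EikoKin}, which exhibits an explicit fibration of $M$ over $S^1$ whose fiber is the four-punctured sphere $\Sigma_{0,4}$. In other words, $M$ is a $\Sigma_{0,4}$-bundle over $S^1$, and combined with the previous step, $M$ is a hyperbolic $\Sigma_{0,4}$-bundle over $S^1$.

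With both hypotheses in hand, Theorem~\ref{profiniterig} applies verbatim and yields that $\pi_1(M)$ is profinitely rigid among $3$-manifold groups. There is no real obstacle here: the content of the corollary is entirely absorbed by Theorem~\ref{profiniterig} once one knows the fibered structure of the magic manifold, which is supplied by \cite{EikoKin}. The only small point of care is to make sure one is citing a fibration whose fiber is genuinely $\Sigma_{0,4}$ (rather than, say, a once-punctured torus from a different fibration of $M$), but Lemma~2.6(1) of \cite{EikoKin} is stated in exactly this form.
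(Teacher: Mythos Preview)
Your proposal is correct and matches the paper's argument exactly: the paper also derives the corollary immediately from Theorem~\ref{profiniterig} together with Lemma~2.6(1) of \cite{EikoKin}, which gives the $\Sigma_{0,4}$-fibration of the magic manifold.
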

Using the ideas in the proof of Theorem~\ref{profiniterig} we can also prove
\begin{theorem}\label{csp+omni}
    Let $\Sigma$ be a finite-type surface for which $Mod(\Sigma)$ is omnipotent and has Congruence Subgroup Property. If $M_\phi$ and $M_\psi$ are finite-volume hyperbolic $\Sigma$-bundles (monodromies $\phi$ and $\psi$ in $Mod(\Sigma)$) with $$\widehat{\pi_1(M_\phi)}\cong \widehat{\pi_1(M_\psi)}$$ then $M_\phi$ and $M_\psi$ have a common finite-sheeted cyclic cover and the same volume. 
\end{theorem}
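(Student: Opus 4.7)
The plan is to first apply Theorem~\ref{toptype} to align the two fibrations, and then to run the Bridson-Reid-Wilton mechanism, which is now unimpeded by the first Betti number hypothesis. Given the isomorphism $\Phi:\widehat{\pi_1(M_\phi)}\to\widehat{\pi_1(M_\psi)}$, the first step is to invoke Theorem~\ref{toptype} to obtain the Thurston-norm and fibered-class preserving isomorphism $H^1(M_\psi,\Z)\to H^1(M_\phi,\Z)$; this sends the cohomology class of the fibration $M_\psi\to S^1$ to a fibered class $\alpha\in H^1(M_\phi,\Z)$ whose fiber surface is again of the same topological type as $\Sigma$. Replacing the original $\Sigma$-fibration of $M_\phi$ with the one representing $\alpha$ produces a new monodromy $\tilde\phi\in Mod(\Sigma)$. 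After this reduction both $M_\phi$ and $M_\psi$ are $\Sigma$-bundles whose fibered cohomology classes correspond under $\Phi$.

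Next, with the fibrations aligned, cohomological goodness of surface groups ensures that the fibered short exact sequences $1\to\pi_1(\Sigma)\to\pi_1(M_\phi)\to\Z\to 1$ and $1\to\pi_1(\Sigma)\to\pi_1(M_\psi)\to\Z\to 1$ remain exact after profinite completion, and $\Phi$ identifies them termwise. In particular, $\Phi$ restricts to an isomorphism $\widehat{\pi_1(\Sigma)}\to\widehat{\pi_1(\Sigma)}$ that intertwines the outer actions of $\tilde\phi$ and $\psi$. The Congruence Subgroup Property of $Mod(\Sigma)$ then realises this outer automorphism by a single element $\eta\in\widehat{Mod(\Sigma)}$, yielding a relation of the form $\eta\tilde\phi\eta^{-1}=\psi^{\pm 1}$ inside $\widehat{Mod(\Sigma)}$.

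The final step will use omnipotence of $Mod(\Sigma)$, exactly as in Bridson-Reid-Wilton, to promote this profinite conjugacy into a genuine conjugacy of some common power: one finds a positive integer $n$ for which $\tilde\phi^n$ and $\psi^n$ are conjugate in $Mod(\Sigma)$ itself. Consequently $M_{\tilde\phi^n}\cong M_{\psi^n}$, and this common manifold is a degree-$n$ cyclic cover of each of $M_\phi$ and $M_\psi$. Since the two covers have the same degree, the equality $\operatorname{vol}(M_\phi)=\operatorname{vol}(M_\psi)$ follows immediately.

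The hard part will be this last step. Converting a relation in $\widehat{Mod(\Sigma)}$ into an honest conjugacy of a common power within $Mod(\Sigma)$ is the delicate content: one must combine the profinite data with a careful omnipotence argument and control the centralizer of a pseudo-Anosov mapping class, so that the conjugating element produced by omnipotence can be arranged to conjugate $\tilde\phi^n$ to $\psi^n$ rather than merely equate them modulo some finite-index subgroup. By contrast, the reduction through Liu's theorem and the translation via CSP are comparatively formal once Theorem~\ref{toptype} is in hand.
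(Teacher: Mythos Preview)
Your overall architecture matches the paper's, but the step you call ``comparatively formal'' contains a real gap. The restriction $\Phi|_{\widehat{\pi_1(\Sigma)}}$ does \emph{not} intertwine $\tilde\phi$ and $\psi$ up to sign. By Theorem~\ref{liuthm} the induced map on the $\widehat{\Z}$ quotient is multiplication by an arbitrary profinite unit $\mu\in\widehat{\Z}^\times$, so if $t$ lifts a generator on the $M_\phi$ side then $\Phi(t)$ acts on $\widehat{\pi_1(\Sigma)}$ as $\hat\psi^{\mu}$, not $\hat\psi^{\pm1}$. Thus the relation you could hope to write is $\eta\,\tilde\phi\,\eta^{-1}=\psi^{\mu}$ in $\mathrm{Out}(\widehat{\pi_1(\Sigma)})$, and $\psi^{\mu}$ has no meaning in $Mod(\Sigma)$. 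There is a second issue layered on top: CSP says $\widehat{Mod(\Sigma)}$ coincides with the congruence completion, but it does \emph{not} say that the map $\widehat{Mod(\Sigma)}\to\mathrm{Out}(\widehat{\pi_1(\Sigma)})$ is surjective, so you have no right to place $\eta$ inside $\widehat{Mod(\Sigma)}$ at all.

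The paper (via the analogue of Lemma~\ref{LiuplusBRW}) avoids both problems by never naming $\eta$ or $\mu$. One works level by level: for each characteristic quotient $\pi_1(\Sigma)/K_i$, the images of $t$ and of the stable letter $s$ for $\psi$ both topologically generate $\widehat{\Z}$, so in $\mathrm{Out}(\pi_1(\Sigma)/K_i)$ the images $\tilde\phi_i$ and $\psi_i$ generate \emph{conjugate cyclic subgroups} (equal orders, not conjugate elements). If $\tilde\phi$ and $\psi$ were independent, omnipotence would produce a finite quotient in which their images have different orders, and CSP lets one take that quotient to be a congruence quotient, contradicting the previous sentence. Hence some $\tilde\phi^m$ is conjugate to $\psi^{\pm n}$; a second omnipotence application to the cyclic group $\langle\tilde\phi\rangle$ forces $m=n$, giving the common cyclic cover. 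So the BRW mechanism runs, but its input is ``conjugate cyclic subgroups in every congruence quotient,'' not the profinite conjugacy of elements you asserted.
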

Combining Theorem~\ref{csp+omni} with Theorem 8 \cite{BHMS}, one can also prove
\begin{theorem}\label{bhms}
    Let $\Sigma$ be a finite-type surface for which $Mod(\Sigma)$ has the Congruence Subgroup Property. Let $M_\phi$ and $M_\psi$ be finite-volume hyperbolic $\Sigma$-bundles (monodromies $\phi$ and $\psi$ in $Mod(\Sigma)$) with $$\widehat{\pi_1(M_\phi)}\cong \widehat{\pi_1(M_\psi)}$$ If all hyperbolic groups are residually finite, then $M_\phi$ and $M_\psi$ have a common finite-sheeted cyclic cover and the same volume. 
\end{theorem}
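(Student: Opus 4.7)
The plan is to reduce Theorem~\ref{bhms} directly to Theorem~\ref{csp+omni}. The two statements differ in exactly one hypothesis: Theorem~\ref{csp+omni} asks that $Mod(\Sigma)$ be both omnipotent and satisfy the Congruence Subgroup Property, while Theorem~\ref{bhms} keeps CSP but trades omnipotence for the (conjectural) assumption that every hyperbolic group is residually finite. The entire task is therefore to show that, conditional on residual finiteness of all hyperbolic groups, $Mod(\Sigma)$ is omnipotent for every finite-type surface $\Sigma$.

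To do this I would invoke Theorem 8 of \cite{BHMS}, which is tailored to precisely this deduction: from the residual finiteness of all hyperbolic groups it produces omnipotence (in the relevant sense) of $Mod(\Sigma)$. With this in hand, both hypotheses of Theorem~\ref{csp+omni} are satisfied---CSP by direct assumption and omnipotence by the previous step---so Theorem~\ref{csp+omni} applies to the bundles $M_\phi$ and $M_\psi$ and delivers a common finite-sheeted cyclic cover together with the equality of volumes.

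The main point to verify is that the notion of omnipotence output by Theorem 8 of \cite{BHMS} matches the notion required in the hypotheses and used in the proof of Theorem~\ref{csp+omni}. I expect this to be the only real obstacle, but it should be a routine matter of lining up definitions, since the two results are clearly designed to interlock; once the formulations agree, no further three-manifold input beyond Theorem~\ref{csp+omni}---and in particular no additional use of Liu's work---is needed.
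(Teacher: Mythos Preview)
Your plan hinges on the claim that Theorem~8 of \cite{BHMS} yields omnipotence of $Mod(\Sigma)$ in the sense of the paper's definition, so that Theorem~\ref{csp+omni} can be invoked as a black box. It does not. As quoted in the paper (Theorem~\ref{twoomnipotence}), the BHMS statement concerns only \emph{pairs} of \emph{pseudo-Anosov} elements, controls only the \emph{ratio} of their orders in a finite quotient (not the orders themselves, and with no uniform constant $\kappa$), and requires complexity at least $2$. None of these line up with the omnipotence hypothesis of Theorem~\ref{csp+omni}, which demands, for every independent family of arbitrary infinite-order elements, quotients realizing prescribed orders $\kappa e_i$. So ``lining up definitions'' is not routine: the definitions genuinely disagree, and you cannot feed the output of \cite{BHMS} into Theorem~\ref{csp+omni} as stated.

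The paper's proof avoids this by \emph{not} passing through Theorem~\ref{csp+omni}. Instead it opens up the argument (Theorem~\ref{toptype} plus \cite{WZ1} to align the fibers, then the Lemma~\ref{LiuplusBRW} template) and observes that, to conclude only that $\phi$ and $\psi$ have a common conjugate power, one needs exactly the two-pseudo-Anosov separation statement that Theorem~\ref{twoomnipotence} supplies; CSP then upgrades the relevant finite quotient to a congruence quotient and yields the contradiction. The second use of omnipotence in Lemma~\ref{LiuplusBRW} (forcing $m=n$) is not needed here, since the goal is cyclic commensurability rather than conjugacy of monodromies. Your proposal becomes correct once you replace ``deduce omnipotence, then cite Theorem~\ref{csp+omni}'' with ``re-run the proof of Theorem~\ref{csp+omni}/Lemma~\ref{LiuplusBRW} using Theorem~\ref{twoomnipotence} at the one spot where a pair of pseudo-Anosovs must be separated''---which is precisely what the paper does.
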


\begin{lem}
    {\it The author thanks Ian Agol, Autumn Kent, Chris Leininger, Rylee Lyman, Ben McReynolds, Mark Pengitore, and Zhiyi Zhang for helpful conversations about this project. The author thanks Alan Reid for helpful conversations and comments on earlier drafts of this paper. The author is also grateful for corrections to previous versions of this preprint from Martin Bridson, Biao Ma, and Ryan Spitler.}
\end{lem}
\section{Preliminaries}\label{sec:prelim}
Let $\Sigma$ be a finite-type surface. Given an orientation-preserving homeomorphism $f:\Sigma\to \Sigma$, the mapping torus of this homomorphism, $M_f$ is a 3-manifold $(\Sigma\times [0,1])/(x,0)\sim (f(x),1)$. By projecting to the second factor of the product, the 3-manifold $M_f$ is a fibration with base space $S^1$ and fiber $\Sigma$. The mapping torus admits a hyperbolic metric exactly when $f$ is {\it pseudo-Anosov} (up to isotopy) \cite{ThurstonFiber}. 
At the level of fundamental group, $\pi_1(M_f)$ is a semidirect product $\pi_1(\Sigma)\rtimes_{f_*}\Z$, where $f_*:\pi_1(\Sigma)\to \pi_1(\Sigma)$ is the automorphism induced on fundamental group by $f$. It is a theorem of Stallings \cite{StallingsFiber} that whenever $\pi_1(M)\cong A\rtimes\Z$ for $A$ a finitely generated normal subgroup, then $M$ fibers over the circle. 
\medbreak The {\it Thurston norm} \cite{Thurston1986ANF} on the homology of an orientable irreducible 3-manifold $M$ with empty or torus boundary is a seminorm $|\cdot|$ on $H_2(M,\partial M;\R)$. For $\beta\in H_2(M,\partial M;\Z)$ $$|\beta|=\min\{-\chi(S)\,|\,[S]=\beta\}$$ where $S$ is a system of surfaces $S=S_1\cup \dots\cup S_n$in $M$ representing the class $\beta$, and $-\chi(S)=\sum_{i=1}^n\max\{0,-\chi(S_i)\}$ where $\chi(S_i)$ is the Euler characteristic of $S_i$. By Poincar{\'e} duality, there is a seminorm on $H^1(M;\R)$, and any (integral) cohomology class that represents a fibration of $M$ lies in the cone over a top dimensional face of the unit norm ball called a {\it fibered face} of $M$. When $M$ is hyperbolic, the Thurston seminorm is a norm (Theorem 1 \cite{Thurston1986ANF}).
\medbreak We now consider the profinite completion $\widehat{\pi_1(M_f)}$ of the fundamental group of a fibered 3-manifold $\widehat{\pi_1(M_f)}$. One can check that the subspace topology induced on the fiber subgroup $\pi_1(\Sigma)$ coincides with the profinite topology on this subgroup (Lemma 2.2 \cite{BR}), and so there is an exact sequence
$$1\to\widehat{\pi_1(\Sigma)}\to \widehat{\pi_1(M_f)}\to\hat{\Z}\to 1$$
Moreover, Jaikin-Zapirain \cite{JZ} showed that when $M$ is a compact 3-manifold with $\widehat{\pi_1(M)}\cong\widehat{\pi_1(N)}$ for $N$ a fibered 3-manifold, then $M$ is fibered. 
\section{Liu's Theorems}\label{sec:liu}
When $M$ is a finite-volume hyperbolic 3-manifold, Liu \cite{Y} shows that there at most finitely many hyperbolic manifolds $N_1,\dots, N_k$ with $\widehat{\pi_1(N_i)}\cong\widehat{\pi_1(M)}$. To show this, Liu proves the following theorems that are crucial for this work
\begin{theorem}[\cite{Y}, Theorem 1.2, Theorem 1.3]\label{liuthm}
    Let $M, N$ be finite volume hyperbolic 3-manifolds and let $\Phi:\widehat{\pi_1(M)}\to\widehat{\pi_1(N)}$ be an isomorphism between the profinite completions of their fundamental groups. The isomorphism $\Phi$ induces an isomorphism $\Phi_*:\widehat{H}_1(M;\Z)\to \widehat{H}_1(N;\Z)$ with $\Phi_*=\hat{h}\circ \mu$ where $h:H_1(M;\Z)\to H_1(N;\Z)$ and $\mu$ denotes the scalar multiplication by a unit $\mu\in\hat{\Z}^\times$. The dual homomorphism $h^*:H^1(N;\Z)\to H^1(M;\Z)$ is Thurston-norm preserving and sends fibered classes of $N$ to fibered classes of $M$.
\end{theorem}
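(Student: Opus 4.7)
The plan is to take Liu's theorem (Theorem~\ref{liuthm}) as the starting point and upgrade its conclusion to include topological type preservation of fiber surfaces. Fix a primitive fibered class $\alpha\in H^1(N;\Z)$ and let $\beta=h^*\alpha\in H^1(M;\Z)$; by Liu's theorem $\beta$ is also a primitive fibered class (integer isomorphisms preserve primitivity). The corresponding connected fibers $\Sigma_\alpha\subset N$ and $\Sigma_\beta\subset M$ are orientable, so by the classification of surfaces it suffices to show that $\chi(\Sigma_\alpha)=\chi(\Sigma_\beta)$ and that the two surfaces have the same number of boundary components.

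The equality of Euler characteristics is immediate from the Thurston-norm preservation in Liu's theorem, since $-\chi(\Sigma_\alpha)=|\alpha|=|h^*\alpha|=-\chi(\Sigma_\beta)$. In the closed case there are no cusps and the proof ends here; the remaining work concerns the cusped case. For the boundary components, I use the standard fact that the number of boundary components of $\Sigma_\alpha$ lying on a cusp torus $T\subset\partial N$ equals the divisibility $d_T(\alpha)$ of the restriction $\alpha|_T\in H^1(T;\Z)\cong\Z^2$, that is, the positive generator of the image of the map $\alpha|_T\colon H_1(T;\Z)\to\Z$. Thus the total boundary count is $\sum_T d_T(\alpha)$, and it is enough to match these cusp-by-cusp.

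To do this, I invoke the peripheral-preservation theorem of Wilton–Zalesskii: the isomorphism $\Phi$ sends conjugacy classes of peripheral subgroups of $\widehat{\pi_1(M)}$ to those of $\widehat{\pi_1(N)}$, producing a bijection $\sigma$ between cusps of $M$ and cusps of $N$ together with isomorphisms $\Phi_i\colon\widehat{P_i^M}\to\widehat{P_{\sigma(i)}^N}$ of profinite peripheral subgroups that fit into a commutative square with the abelianization maps into $\widehat{H}_1(M)$ and $\widehat{H}_1(N)$. Since Liu gives $\Phi_*=\hat h\circ\mu$ with $\mu\in\hat\Z^\times$, chasing this diagram yields that the image of $\widehat{h^*\alpha}\circ\hat\iota_{T_i^M}\colon\hat\Z^2\to\hat\Z$ equals the image of $\hat\alpha\circ\hat\iota_{T_{\sigma(i)}^N}\circ\Phi_i$ precomposed with scalar multiplication by $\mu^{-1}$. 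Both $\mu^{-1}$ and $\Phi_i$ are isomorphisms, so precomposing with them does not change the image in $\hat\Z$. This common image therefore equals the image of $\hat\alpha\circ\hat\iota_{T_{\sigma(i)}^N}$, which is $d_{\sigma(i)}^N(\alpha)\hat\Z$. Because the image of an integral homomorphism $\Z^2\to\Z$ and that of its profinite completion $\hat\Z^2\to\hat\Z$ have the same positive generator, one concludes $d_i^M(h^*\alpha)=d_{\sigma(i)}^N(\alpha)$. Summing over cusps gives equality of the numbers of boundary components, completing the proof.

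The main obstacle is extracting the cusp-level commutative square cleanly enough to feed into Liu's global formula $\Phi_*=\hat h\circ\mu$; once that compatibility is in hand, the entire argument rests on the simple but essential observation that precomposing any map $\hat\Z^2\to\hat\Z$ with an isomorphism of $\hat\Z^2$ leaves its image in $\hat\Z$ unchanged. This is what allows the cusp isomorphisms $\Phi_i$, which need not be integer-level, to be absorbed harmlessly into the computation of divisibility.
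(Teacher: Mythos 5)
The statement you were given, Theorem~\ref{liuthm}, is not proved anywhere in the paper: it is a verbatim citation of Liu's Theorems 1.2 and 1.3 and is used as a black box. Your proposal implicitly acknowledges this (``take Liu's theorem as the starting point'') and does not actually establish the quoted statement. What you have written instead is a proof of Theorem~\ref{toptype}, the upgrade of Liu's theorem that says the aligned fiber surfaces are homeomorphic, not merely of equal Euler characteristic. So as a proof of Theorem~\ref{liuthm} the proposal is off target; I will evaluate it as a proof of Theorem~\ref{toptype}.

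Read that way it is correct, but takes a genuinely different route from the paper. The paper replaces the monodromies by pure powers $\varphi^{k!}$ and $\psi^{k!}$ and counts punctures dynamically via Proposition~\ref{keyprop}: for a pure pseudo-Anosov $f$, the number of punctures equals the number of $f$-fixed conjugacy classes $n_f$ in $\pi_1(\Sigma)$, which in turn equals the number $\hat n_f$ of $\hat f$-fixed conjugacy classes in $\widehat{\pi_1(\Sigma)}$; the substantive inequality $\hat n_f\le n_f$ is extracted from the Wilton--Zalesskii classification of abelian closed subgroups, which forces an $\hat f$-fixed class to lie in a peripheral subgroup. The profinite isomorphism then transports the conjugation actions, so the counts for $M$ and $N$ agree. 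Your argument bypasses the monodromy and any passage to powers: you compute the number of boundary circles cusp by cusp as the divisibility of the restricted fibered class, then match these divisibilities using the Wilton--Zalesskii cusp bijection together with Liu's explicit formula $\Phi_*=\hat h\circ\mu$. The observation that precomposing a map $\hat\Z^2\to\hat\Z$ with the automorphisms $\Phi_i$ and $\mu^{-1}$ leaves its image unchanged, plus flatness of $\hat\Z$ to identify $\im(\hat f)=d\hat\Z$ when $\im(f)=d\Z$, carries the whole computation. This is a pleasant simplification in that it avoids the passage to pure powers. Two points to tighten: Wilton--Zalesskii gives $\Phi(\overline{P_i^M})$ only up to conjugacy in $\widehat{\pi_1(N)}$, so you should first adjust $\Phi$ by an inner automorphism (harmless at the level of $\hat H_1$) before writing down a literal commutative square; and you should record the standard fact that the fiber of a cusped fibered hyperbolic manifold meets every cusp torus, so each $d_T(\alpha)$ is a well-defined positive integer.
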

\begin{theorem}[\cite{Y}, Corollary 6.2, Corollary 6.3]\label{liuthm2}
    Let $M, N$ be finite volume hyperbolic 3-manifolds and let $\Phi:\widehat{\pi_1(M)}\to\widehat{\pi_1(N)}$ be an isomorphism between the profinite completions of their fundamental groups. For any connected fiber surface $\Sigma_M$ for $M$, there is a connected fiber surface $\Sigma_N$ for $N$ such that $\Phi(\widehat{\pi_1(\Sigma_M)})\cong\widehat{\pi_1(\Sigma_N)}$. Furthermore, $\chi(\Sigma_M)=\chi(\Sigma_N)$.
\end{theorem}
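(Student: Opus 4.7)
The plan is to use Theorem~\ref{liuthm} to transport an integral fibered cohomology class from $M$ to $N$, identify the corresponding fiber subgroups inside the profinite completions, and read off the Euler characteristic equality from Thurston-norm preservation. The main work is a bookkeeping step on profinite kernels rather than any new geometric input.

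Let $\alpha_M \in H^1(M;\Z)$ be the primitive integral fibered class dual to $\Sigma_M$, so that $\pi_1(\Sigma_M) = \ker(\alpha_M : \pi_1(M) \to \Z)$ by Stallings' theorem. By Theorem~\ref{liuthm}, the $\Z$-module isomorphism $h^* : H^1(N;\Z) \to H^1(M;\Z)$ preserves the Thurston norm and sends fibered classes to fibered classes; hence $\alpha_N := (h^*)^{-1}(\alpha_M)$ is itself a primitive integral fibered class on $N$, which (again by Stallings) corresponds to a connected fiber surface $\Sigma_N$ with $\pi_1(\Sigma_N) = \ker(\alpha_N : \pi_1(N) \to \Z)$.

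The core verification is that $\Phi(\widehat{\pi_1(\Sigma_M)}) = \widehat{\pi_1(\Sigma_N)}$. By Lemma~2.2 of \cite{BR}, the profinite topology on $\pi_1(\Sigma_M)$ agrees with its subspace topology in $\pi_1(M)$, so $\widehat{\pi_1(\Sigma_M)}$ is precisely the kernel of the continuous extension $\widehat{\alpha_M} : \widehat{\pi_1(M)} \to \widehat{\Z}$, and likewise $\widehat{\pi_1(\Sigma_N)} = \ker(\widehat{\alpha_N})$. The factorization $\Phi_* = \hat{h} \circ \mu$ on profinite $H_1$ provided by Theorem~\ref{liuthm}, combined with $h^*(\alpha_N) = \alpha_M$, gives $\widehat{\alpha_N} \circ \Phi = \mu \cdot \widehat{\alpha_M}$ as continuous homomorphisms $\widehat{\pi_1(M)} \to \widehat{\Z}$. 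Since multiplication by the unit $\mu \in \widehat{\Z}^\times$ is a bijection of $\widehat{\Z}$, these two maps have the same kernel. The Euler characteristic equality then follows because a connected fiber in a hyperbolic fibered 3-manifold realizes the Thurston norm of its dual class: $-\chi(\Sigma_M) = |\alpha_M| = |\alpha_N| = -\chi(\Sigma_N)$.

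The main subtlety, rather than a serious obstacle, is ensuring that the profinite-level conclusion descends from genuinely integral data. This is precisely what Theorem~\ref{liuthm} supplies by asserting that $\Phi_*$ factors as $\hat{h} \circ \mu$ with $h$ an integral map and $h^*$ preserving primitivity and fiberedness, so that $\alpha_N$ is an honest integral primitive fibered class on $N$ rather than merely a profinite shadow of one; without this integrality statement, one could only produce the fiber subgroup up to some closure in $\widehat{\pi_1(N)}$, not as the profinite completion of a genuine surface subgroup.
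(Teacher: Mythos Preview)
Your argument is correct. The paper does not supply its own proof of this statement; it is quoted as Corollaries~6.2 and~6.3 of Liu~\cite{Y}, so there is no in-paper proof to compare against. Your derivation from Theorem~\ref{liuthm} is exactly the natural one, and the fact that Liu records these as corollaries of his main results suggests that this is the intended route.

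Two minor points worth making explicit, since you gesture at them but do not spell them out. First, Theorem~\ref{liuthm} as stated only asserts that $h^*$ sends fibered classes of $N$ to fibered classes of $M$; to know that $(h^*)^{-1}$ does the same in the other direction you should apply Theorem~\ref{liuthm} to $\Phi^{-1}$ rather than $\Phi$. Second, you use that $h$ (hence $h^*$) is an isomorphism of $\Z$-modules, not merely a homomorphism, when you invert it and when you claim primitivity is preserved; this follows because $\Phi_* = \hat h \circ \mu$ is an isomorphism and $\mu$ is a unit, forcing $\hat h$ to be an isomorphism, and an integral map between finitely generated abelian groups whose profinite completion is an isomorphism is itself an isomorphism. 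Both points are routine, but worth a sentence each if you want the write-up to be self-contained.
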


\section{Detecting topological type of fibers using profinite completions}
This section aims to prove Theorem~\ref{toptype}. To begin we analyze surface automorphisms and their induced actions on characteristic quotients of the surface groups. Then let $\Sigma=\Sigma_{g,n}$ be a finite-type genus $g$ surface with $n$ punctures and $f:\pi_1(\Sigma)\to\pi_1(\Sigma)$ an automorphism. Assume $f$ is induced by a pseudo-Anosov. We can replace $f$ with $f^m$ for some integer $m$ to obtain a pseudo-Anosov that fixes each puncture of $\Sigma$ (that is, fixes the conjugacy classes in $\pi_1(\Sigma)$ of peripheral loops around each puncture of $\Sigma$). We define the following numbers:
\begin{align*}
    n_f &=\text{number of fixed primitive conjugacy classes of } f \text{ in }\pi_1(\Sigma)\\
    \hat{n}_f &=\text{number of fixed primitive conjugacy classes of }\hat{f} \text{ in } \widehat{\pi_1(\Sigma)}
\end{align*}
\begin{prop}\label{keyprop}
In the setup above, for a pseudo-Anosov $f$, $n=n_f=\hat{n}_f$.
\begin{proof}
Since a pseudo-Anosov is not reducible by definition, $n=n_f$. Upon completion, each fixed conjugacy class of $f$ in $\pi_1(S)$ will give a fixed conjugacy class of $\hat{f}$ in $\widehat{\pi_1(\Sigma)}$, hence $n_f\leq \hat{n}_f$. To see that $\hat{n}_f\leq n_f$, we observe that for any element $\alpha$ of a fixed conjugacy class of $\hat{f}$ in $\widehat{\pi_1(\Sigma)}$, we can choose $t\in\widehat{\pi_1(M_{f})}$, such that $t$ and $\alpha$ commute. The element $t$ can be chosen to be a pre-image of a unit $\xi\in\hat{\Z}$ (which is a topological generator for $\hat{\Z}$) under the fixed epimorphism $\widehat{\pi_1(M_{f})}\to\hat{\Z}$). Let $H=\overline{\langle\alpha,t\rangle}$ be the closed abelian group generated by $\alpha$ and $t$. 
\medbreak We argue that this subgroup $H$ is not procyclic. First, a single topological generator $s$ for this group cannot live in the kernel $\widehat{\pi_1(\Sigma)}$ of the epimorphism $\widehat{\pi_1(M_{f})}\twoheadrightarrow \hat{\Z}$ because $t$ maps non-trivially under this epimorphism. If, on the other hand, $s$ maps non-trivially to $\hat{\Z}$ under the fixed epimorphism above, the subgroup topologically generated by $s$ also maps non-trivially to $\hat{\Z}$, and $\alpha$ cannot map non-trivially to $\hat{\Z}$ under this epimorphism. Thus, $H\leq \widehat{\pi_1(M_{f})}$ is not procyclic. Since $H$ is abelian and not procyclic, $H$ is not a closed subgroup of a free profinite group and therefore, $H$ is not projective by Lemma 7.6.3 \cite{RZ}. By Theorem 9.3 \cite{WZ1}, $H$ is conjugate into the closure of a cusp subgroup $P<\pi_1(M_{f})$. The intersection $\overline{P}\cap\widehat{\pi_1(\Sigma)}$ is generated by a peripheral element $\beta\in\pi_1(\Sigma)$. Since $\langle\alpha\rangle=H\cap \widehat{\pi_1(\Sigma)}<\overline{P}\cap\widehat{\pi_1(\Sigma)}$, $\alpha$ is in the closure of a peripheral element. Thus, $\hat{n}_f\leq n_f$. 
\end{proof}
\end{prop}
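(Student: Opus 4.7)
The proposition splits into three claims: $n = n_f$, $n \leq \hat{n}_f$, and $\hat{n}_f \leq n$. The first is immediate from the definition of a pseudo-Anosov: since $f$ is irreducible, no non-peripheral primitive conjugacy class of $\pi_1(\Sigma)$ can be fixed by $f_*$, and after the passage to the power of $f$ that fixes each puncture of $\Sigma$, exactly the $n$ peripheral conjugacy classes are fixed, giving $n = n_f$. The inequality $n \leq \hat{n}_f$ then follows from the natural map on conjugacy classes, which carries $f$-fixed classes to $\hat{f}$-fixed classes; conjugacy separability of $\pi_1(\Sigma)$ (which is free, hence conjugacy separable) ensures distinct peripheral classes remain distinct in the profinite completion.

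The heart of the proposition is the reverse inequality $\hat{n}_f \leq n$: every $\hat{f}$-fixed primitive conjugacy class in $\widehat{\pi_1(\Sigma)}$ is represented by a peripheral element. The plan is to exploit the short exact sequence $1 \to \widehat{\pi_1(\Sigma)} \to \widehat{\pi_1(M_f)} \to \hat{\Z} \to 1$ to promote such a class to an abelian subgroup of $\widehat{\pi_1(M_f)}$ that is not procyclic, and then apply the profinite structure theorem of Wilton--Zalesskii to confine it inside a cusp. Fix any $s \in \widehat{\pi_1(M_f)}$ mapping to a topological generator $\xi$ of $\hat{\Z}$; then conjugation by $s$ realizes $\hat{f}$ on $\widehat{\pi_1(\Sigma)}$ up to an inner automorphism. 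Given a primitive $\alpha$ representing the fixed conjugacy class, write $\hat{f}(\alpha) = \gamma \alpha \gamma^{-1}$ and set $t := \gamma^{-1} s$, so that $t$ commutes with $\alpha$ while still projecting to $\xi$. Let $H := \overline{\la \alpha, t \ra}$.

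I expect the main obstacle to be verifying that $H$ is not procyclic. The cleanest way is a contradiction argument: if $H$ were procyclic, then composing the inclusion $H \hookrightarrow \widehat{\pi_1(M_f)}$ with the projection to $\hat{\Z}$ would yield a continuous homomorphism whose image contains $\xi$, hence (since $\xi$ generates $\hat{\Z}$ topologically) is all of $\hat{\Z}$. A procyclic group surjecting onto $\hat{\Z}$ must itself be isomorphic to $\hat{\Z}$, so the kernel $H \cap \widehat{\pi_1(\Sigma)}$ would be trivial; but $\alpha$ lies in this kernel and is non-trivial, a contradiction.

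Once $H$ is shown to be abelian but not procyclic, Lemma 7.6.3 of Ribes--Zalesskii (abelian projective profinite groups are procyclic) gives that $H$ is not projective, and hence not a closed subgroup of a free profinite group. Theorem 9.3 of Wilton--Zalesskii then places a conjugate of $H$ inside the closure $\overline{P}$ of a cusp subgroup $P < \pi_1(M_f)$. Intersecting with $\widehat{\pi_1(\Sigma)}$ shows that $\alpha$ lies in $\overline{P} \cap \widehat{\pi_1(\Sigma)}$, which is the closure of the cyclic subgroup generated by a peripheral loop around one of the $n$ punctures. Primitivity of $\alpha$ then identifies its conjugacy class with one of the $n$ peripheral classes, yielding $\hat{n}_f \leq n$ and completing the proof.
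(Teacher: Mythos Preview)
Your proof is correct and follows essentially the same route as the paper's: build a commuting pair $(\alpha,t)$ in $\widehat{\pi_1(M_f)}$, show the closed abelian subgroup $H$ they generate is not procyclic, invoke Ribes--Zalesskii to conclude $H$ is not projective, and then apply Wilton--Zalesskii's Theorem~9.3 to conjugate $H$ into a cusp closure. Your contradiction argument for non-procyclicity (a procyclic group surjecting onto $\hat{\Z}$ is $\hat{\Z}$, forcing trivial kernel) is in fact a cleaner phrasing of the same step than the paper gives, and your added remark on conjugacy separability of the free group $\pi_1(\Sigma)$ makes explicit why the $n$ peripheral classes stay distinct in the completion.
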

\begin{cor}\label{profinitepA}
For $S$ a finite-type surface, a homomorphism $\phi_*:\pi_1(S)\to\pi_1(S)$ is induced by a pseudo-Anosov $\phi:S\to S$ if and only if for all $n\in\Z$ the completed homomorphisms $\widehat{\phi_*^n}:\widehat{\pi_1(S)}\to \widehat{\pi_1(S)}$ do not fix a non-peripheral conjugacy class. 
\begin{proof}
The forward direction is Proposition~\ref{keyprop}. For the reverse direction observe that $\phi$ is not periodic, and $\phi$ is not reducible.
\end{proof}
\end{cor}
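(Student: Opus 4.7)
The plan is to establish the two directions separately, leveraging Proposition~\ref{keyprop} for the forward direction and the Nielsen--Thurston classification for the reverse direction.

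For the forward direction, assume $\phi$ is pseudo-Anosov and fix $n \neq 0$ in $\Z$. Then $\phi^n$ is also pseudo-Anosov, but it may permute the punctures of $S$ nontrivially. Since the induced permutation of the finitely many punctures has finite order, some further power $\phi^{nm}$ fixes each puncture individually, i.e.\ fixes each peripheral conjugacy class in $\pi_1(S)$. Proposition~\ref{keyprop} then applies to $\phi^{nm}$, so the only conjugacy classes of $\widehat{\pi_1(S)}$ fixed by $\widehat{\phi_*^{nm}}$ are peripheral. Any conjugacy class fixed by $\widehat{\phi_*^n}$ is automatically fixed by $\widehat{\phi_*^{nm}}$, hence must be peripheral.

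For the reverse direction I would argue by contrapositive using the Nielsen--Thurston trichotomy. If $\phi$ is not pseudo-Anosov, then $\phi$ is either periodic or reducible. In the periodic case, some power $\phi^k$ is isotopic to the identity, so $\phi_*^k$ is an inner automorphism of $\pi_1(S)$; then $\widehat{\phi_*^k}$ is also inner and fixes every conjugacy class, in particular any non-peripheral one (e.g.\ the class of any essential simple closed curve not homotopic to a puncture, which exists since $S$ supports a hyperbolic structure). In the reducible case, $\phi$ preserves a nonempty essential multicurve up to isotopy, and some power $\phi^k$ fixes each component as an unoriented conjugacy class; after passing to a further power we may assume each component is fixed as an oriented conjugacy class in $\pi_1(S)$. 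These essential simple closed curves are non-peripheral, and their conjugacy classes remain fixed under $\widehat{\phi_*^k}$ upon profinite completion.

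The only subtle step is the puncture-permutation reduction in the forward direction, where one must pass from the given $\phi^n$ to a power $\phi^{nm}$ fixing each puncture in order to invoke Proposition~\ref{keyprop} (whose hypothesis requires each peripheral conjugacy class to be fixed). Everything else is routine packaging of Proposition~\ref{keyprop} together with the Nielsen--Thurston classification.
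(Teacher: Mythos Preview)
Your proof is correct and follows essentially the same route as the paper's. The paper's proof is a two-line sketch (``forward direction is Proposition~\ref{keyprop}; for the reverse, $\phi$ is not periodic and not reducible''), and your write-up simply unpacks both directions: you make explicit the puncture-permutation reduction needed to invoke Proposition~\ref{keyprop} for an arbitrary power $\phi^n$, and you spell out the contrapositive via the Nielsen--Thurston trichotomy rather than leaving it implicit.
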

\begin{proof}[Proof of Theorem~\ref{toptype}]
When $M$ (and $N$) are closed, Theorem~\ref{liuthm} shows that Theorem~\ref{toptype} holds. Thus, we assume that $M$ is cusped and fix a fibration of $M$ over $S^1$ with fiber a punctured surface $\Sigma_M$ and monodromy $\varphi$. By Theorem~\ref{liuthm2}, for a fixed isomorphism $\Phi:\widehat{\pi_1(M)}\to\widehat{\pi_1(N)}$, there is a punctured surface $\Sigma_N$ (with the same complexity as $\Sigma_M$) which is the fiber of a fibration of $N$ over $S^1$ (with monodromy $\psi$) and $\Phi(\widehat{\pi_1(\Sigma_M))}\cong\widehat{\pi_1(\Sigma_N)}$. The goal is to show that $\Sigma_M$ is homeomorphic to $\Sigma_N$. The surface $\Sigma_M$ is a genus $g_M$ surface with $n_M$ punctures and the surface $\Sigma_N$ is a genus $g_N$ surface with $n_N$ punctures. Since $\chi(\Sigma_M)=\chi(\Sigma_N)$ by Theorem~\ref{liuthm2}, it is sufficient to prove that $n_N=n_M$. To establish this we argue as follows: \medbreak By Theorem~\ref{liuthm} and Theorem~\ref{liuthm2}, we get the following diagram
 \[
   \begin{tikzcd}
  1 \arrow[r] &\pi_1(\Sigma_M)\arrow[r]\arrow[d] &\pi_1(M)\arrow[r]\arrow[d] &\Z\arrow[r]\arrow[d] & 1
    \\
    1 \arrow[r] &\widehat{\pi_1(\Sigma_M)}\arrow[r]\arrow[d, "\Phi|_{\widehat{\pi_1(\Sigma_M)}}"] &\widehat{\pi_1(M)}\arrow[r]\arrow[d, "\Phi"] &\widehat{\Z}\arrow[r]\arrow[d, "\times \mu"] & 1
    \\
    1 \arrow[r] &\widehat{\pi_1(\Sigma_N)}\arrow[r] &\widehat{\pi_1(N)}\arrow[r] &\widehat{\Z}\arrow[r] & 1
    \\
    1 \arrow[r] &\pi_1(\Sigma_N)\arrow[r]\arrow[u] &\pi_1(N)\arrow[r]\arrow[u] &\Z\arrow[r]\arrow[u] & 1
    \\
 \end{tikzcd}
\]
where the $\times\mu:\hat{\Z}\to\hat{\Z}$ map is multiplication by a profinite unit $\mu$ in $\hat{\Z}$. Replace $\varphi$ and $\psi$ (as needed) with an appropriate power (say $k!$ for a large enough natural number $k$) such that both $\varphi^{k!}$ and $\psi^{k!}$ are pure mapping classes. 
\medbreak We first observe that $\hat{\varphi}^{k!}$ and $\hat{\psi}^{k!}$ have an equal number of fixed conjugacy classes in $\widehat{\pi_1(\Sigma_M)}\cong\widehat{\pi_1(\Sigma_N)}$ i.e. $\hat{n}_{\varphi^{k!}}=\hat{n}_{\psi^{k!}}$. To see this, set $t\in\widehat{\pi_1(M)}$ to be in the preimage of a unit in $\hat{\Z}$, the conjugation action of $t$ on $\widehat{\pi_1(\Sigma_M)}$ is $\hat{\varphi}$. The conjugation action of $\Phi(t)$ on $\widehat{\pi_1(\Sigma_N)}$ is $\hat{\psi}$ by the diagram above. The conjugation action on $\widehat{\pi_1(\Sigma_M)}$ by $t^{k!}$ is $\hat{\varphi}^{k!}$ and the conjugation action on $\widehat{\pi_1(\Sigma_N)}$ by $\Phi(t^{k!})$ is $\hat{\psi}^{k!}$. For every conjugacy class $\alpha$ of elements in $\widehat{\pi_1(\Sigma_M)}$ fixed by $t^{k!}$-conjugation, there is a conjugacy class $\Phi(\alpha)$ of elements in $\widehat{\pi_1(\Sigma_N)}$ fixed by $\Phi(t^{k!})$-conjugation, and vice versa. Thus, $\hat{n}_{\varphi^{k!}}=\hat{n}_{\psi^{k!}}$. By Proposition~\ref{keyprop}, $n_M=n_{\varphi^{k!}}=\hat{n}_{\varphi^{k!}}$ and $n_N=n_{\psi^{k!}}=\hat{n}_{\psi^{k!}}$. Thus, $n_M=n_N$, $g_M=g_N$, and $\Sigma_M$ is homeomorphic to $\Sigma_N$ as claimed. 
\end{proof}
\begin{rmk}[M. Bridson]\label{bridsonrmk}
    The hypothesis of hyperbolicity is necessary for the proof of Theorem~\ref{toptype}. Without hyperbolicity, for instance, every mapping class $\phi\in Mod(\Sigma_{0,3})$ gives a group $F_2\rtimes_\phi\Z$ which is also the fundamental group of a $\Sigma_{1,1}-$bundle over $S^1$. 
\end{rmk}

\section{Distinguishing hyperbolic $\Sigma_{0,4}$ bundles over $S^1$}\label{modS}
\noindent Recall that the Mapping Class Group of a finite type surface $\Sigma$, here denoted as $Mod(\Sigma)$, is the group of orientation-preserving homeomorphisms of $\Sigma$ up to isotopy. 
\begin{defn}
    The group $Mod(\Sigma)$ is omnipotent if for every independent family of (infinite-order) elements (i.e. having pairwise non-conjugate powers) $\gamma_1,\dots,\gamma_n\in Mod(\Sigma)$, there is a positive integer $\kappa$ such that for any $n-$tuple of positive integers $(e_1,\dots,e_n)$ there is a finite quotient $f:Mod(\Sigma)\twoheadrightarrow Q$ such that the order of $f(\gamma_i)$ is $\kappa e_i$ for all $1\leq i\leq n$. 
\end{defn} 

\begin{defn}
    A principal congruence quotient of $Mod(\Sigma)$ is the image of $Mod(\Sigma)$ under the canonical map $$Mod(\Sigma)\to Out(\pi_1(\Sigma))\to Out(\pi_1(\Sigma)/K)$$ where $K<\pi_1(\Sigma)$ is a characteristic subgroup of finite index. A congruence quotient $$Mod(\Sigma)\to Q$$ is a finite quotient that factors through a principal congruence quotient of $Mod(\Sigma)$.
\end{defn}
\begin{defn}
    The group $Mod(\Sigma)$ has the Congruence Subgroup Property when every finite quotient of $Mod(\Sigma)$ is a congruence quotient.
\end{defn}

\begin{theorem}\label{Mod04}
The group $Mod(\Sigma_{0,4})$ is omnipotent and has the Congruence Subgroup Property.
\begin{proof}
The group $Mod(\Sigma_{0,4})\cong PSL(2,\Z)\ltimes (\Z/2\Z)^2$ (Proposition 2.7 \cite{farb2011primer}), and this is a virtually free group. Virtually free groups are omnipotent by a theorem of Bridson-Wilton (Theorem 4.3 \cite{BW}). That $Mod(\Sigma_{0,4})$ has congruence subgroup property is a theorem of Diaz-Donagi-Harbater \cite{CSPDDH}. 
\end{proof}
\end{theorem}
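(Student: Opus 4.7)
The plan is to reduce both assertions to known structural results after identifying $Mod(\Sigma_{0,4})$ explicitly. The starting point is the standard isomorphism $Mod(\Sigma_{0,4}) \cong PSL(2,\Z) \ltimes (\Z/2\Z)^2$ (Farb--Margalit), where the $(\Z/2\Z)^2$ factor records the symmetry of the four punctures arising from the hyperelliptic double cover and $PSL(2,\Z)$ is the mapping class group of the covering torus. Since $PSL(2,\Z) \cong \Z/2 * \Z/3$ is virtually free (it contains a free subgroup of index $6$), the semidirect product $Mod(\Sigma_{0,4})$ is itself virtually free, being a finite extension of a virtually free group.

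Once virtual freeness is in hand, omnipotence follows immediately from the theorem of Bridson--Wilton that every virtually free group is omnipotent. The hypothesis of their theorem applies verbatim to $Mod(\Sigma_{0,4})$, so no further argument is needed for the first half of the conclusion.

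For the Congruence Subgroup Property, I would appeal directly to the work of Diaz--Donagi--Harbater, who prove CSP for $Mod(\Sigma_{0,4})$. The only nontrivial task is to verify that the notion of congruence quotient used in their paper agrees with the one fixed in Section~\ref{modS} above, namely finite quotients factoring through $Out(\pi_1(\Sigma)/K)$ for $K$ a finite-index characteristic subgroup of $\pi_1(\Sigma)$; this is a routine compatibility check. The main obstacle, to the extent there is one, is bookkeeping rather than mathematics: locating the two black-box results (Bridson--Wilton's omnipotence theorem for virtually free groups and the Diaz--Donagi--Harbater CSP theorem) and confirming that the stated conclusions match the definitions used here. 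I do not anticipate any genuine difficulty, since both halves of the statement are immediate consequences of existing theorems once the explicit identification of $Mod(\Sigma_{0,4})$ as a virtually free group is written down.
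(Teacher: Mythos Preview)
Your proposal is correct and follows essentially the same approach as the paper: identify $Mod(\Sigma_{0,4})\cong PSL(2,\Z)\ltimes(\Z/2\Z)^2$ via Farb--Margalit, deduce virtual freeness, apply Bridson--Wilton for omnipotence, and cite Diaz--Donagi--Harbater for CSP. The paper's proof is slightly terser but the logical structure and the cited references are identical.
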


The following lemma makes use of the proofs of Theorem 2.4 and Lemma 2.5 \cite{BRW}. 
\begin{lemma}\label{LiuplusBRW}
For hyperbolic 3-manifolds $M$ and $N$ that fiber over $S^1$ with $\Sigma_{0,4}$ fiber and monodromies $\varphi$ and $\psi$ respectively, let $\Phi:\widehat{\pi_1(M)}\to\widehat{\pi_1(N)}$ be an isomorphism that identifies the closures of the fiber subgroups of $M$ and $N$. The monodromies $\varphi$ and $\psi$ are conjugate elements of $Mod(\Sigma_{0,4})$, and therefore $M$ and $N$ are homeomorphic. 
\end{lemma}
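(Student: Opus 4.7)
The plan is to adapt the strategy of Bridson--Reid--Wilton (proof of Theorem~2.4 and Lemma~2.5 in \cite{BRW}) to the present setting; the identification of fiber subgroups (forced in \cite{BRW} by a first-Betti-number-one hypothesis) is built into our hypothesis, and the remainder of the argument uses the Congruence Subgroup Property and omnipotence from Theorem~\ref{Mod04} to descend a profinite relation between $\varphi$ and $\psi$ to an honest conjugacy in $Mod(\Sigma_{0,4})$.

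First, the algebraic setup. From $\Phi$ one extracts an isomorphism $\Phi_0\colon\widehat{\pi_1(\Sigma_M)}\to\widehat{\pi_1(\Sigma_N)}$ on fiber closures and a profinite unit $\mu\in\hat{\Z}^\times$ on the $\hat\Z$-quotients. Lifting base generators $t_M,t_N$ and writing $\Phi(t_M)=t_N^\mu\cdot s$ for some $s\in\widehat{\pi_1(\Sigma_N)}$, the identity $\Phi(t_M x t_M^{-1})=\Phi(t_M)\Phi(x)\Phi(t_M)^{-1}$ gives
\[
\Phi_0\,\hat\varphi\,\Phi_0^{-1}=\mathrm{Inn}(s)\circ\hat\psi^\mu.
\]
Because $\Phi$ preserves cusp subgroups (Wilton--Zalesskii), its restriction $\Phi_0$ preserves the peripheral structure of the fiber. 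After fixing a marking $\Sigma_{0,4}\cong\Sigma_M\cong\Sigma_N$, the profinite Dehn--Nielsen--Baer-type characterization of the image of $\widehat{Mod(\Sigma_{0,4})}\hookrightarrow Out(\widehat{\pi_1(\Sigma_{0,4})})$ as the peripheral-structure-preserving outer automorphisms places $[\Phi_0]$ in $\widehat{Mod(\Sigma_{0,4})}$. Consequently $[\hat\varphi]$ and $[\hat\psi]^\mu$ are conjugate in $\widehat{Mod(\Sigma_{0,4})}$, not merely in $Out(\widehat{\pi_1(\Sigma_{0,4})})$.

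Next, by CSP this conjugacy is witnessed in every finite quotient $Mod(\Sigma_{0,4})\twoheadrightarrow Q$: the image of $\varphi$ is conjugate in $Q$ to the image of $\psi^{m_Q}$, where $m_Q\in\Z$ represents $\mu$ modulo $\mathrm{ord}_Q(\bar\psi)$ and satisfies $\gcd(m_Q,\mathrm{ord}_Q(\bar\psi))=1$ since $\mu$ is a unit. In particular $\mathrm{ord}_Q(\bar\psi^{m_Q})=\mathrm{ord}_Q(\bar\psi)$, so $\mathrm{ord}_Q(\bar\varphi)=\mathrm{ord}_Q(\bar\psi)$ in every finite quotient of $Mod(\Sigma_{0,4})$. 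Now if $\{\varphi,\psi\}$ were independent, omnipotence applied with $(e_1,e_2)=(1,2)$ would yield a finite quotient $Q_0$ with $\mathrm{ord}(\bar\varphi)=\kappa\neq 2\kappa=\mathrm{ord}(\bar\psi)$, a contradiction. Hence $\varphi^a\sim\psi^b$ in $Mod(\Sigma_{0,4})$ for some nonzero $a,b\in\Z$.

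Finally, I would upgrade $\varphi^a\sim\psi^b$ to $\varphi\sim\psi^{\pm 1}$. The conjugacy of powers yields a homeomorphism $M_{\varphi^a}\cong M_{\psi^b}$ of the $|a|$- and $|b|$-fold cyclic covers of $M$ and $N$; profinite invariance of hyperbolic volume (via $L^2$-torsion) gives $\mathrm{vol}(M)=\mathrm{vol}(N)$ and hence $|a|=|b|$. Since $Mod(\Sigma_{0,4})$ is virtually free, the centralizer of a pseudo-Anosov element is virtually cyclic, and a unique-root extraction inside this virtually cyclic centralizer promotes $\varphi^a\sim\psi^{\pm a}$ to $\varphi\sim\psi^{\pm 1}$, giving $M\cong N$. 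The main obstacle in this plan is the first phase: verifying that $\Phi_0$ descends to an element of $\widehat{Mod(\Sigma_{0,4})}$ (rather than sitting only in $Out(\widehat{\pi_1(\Sigma_{0,4})})$), which is what allows the derived conjugacy to be witnessed in the finite quotients of $Mod(\Sigma_{0,4})$ where omnipotence can be applied to produce the contradiction.
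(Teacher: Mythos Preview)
Your overall architecture---CSP plus omnipotence to rule out independence, then root extraction in a virtually free group---matches the paper's, but the obstacle you yourself flag in the last paragraph is real, and it is precisely what the paper's argument is designed to avoid. You want $[\Phi_0]\in\widehat{Mod(\Sigma_{0,4})}$ so that the profinite conjugacy $[\hat\varphi]\sim[\hat\psi]^\mu$ lives inside $\widehat{Mod(\Sigma_{0,4})}$ and hence (via CSP) is visible in every finite quotient of $Mod(\Sigma_{0,4})$; but no profinite Dehn--Nielsen--Baer theorem of the required form is available, and without it your conjugacy sits only in $Out(\widehat{F_3})$, where omnipotence of $Mod(\Sigma_{0,4})$ buys you nothing.

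The paper bypasses this by never attempting to place $\Phi_0$ in $\widehat{Mod(\Sigma_{0,4})}$. Instead one fixes the cofinal tower $K_i\triangleleft\Phi(\pi_1(\Sigma))$ of characteristic subgroups (intersections of all subgroups of index at most $i$) and the matching tower $L_i=\widehat{K_i}\cap\pi_1(\Sigma')$. Both $t\in\Phi(\pi_1(M))$ and $s\in\pi_1(N)$ lie in $\widehat{\pi_1(N)}$ and act by conjugation on the \emph{same} finite group $\widehat{\pi_1(\Sigma')}/\widehat{L_i}$; since both project to topological generators of $\hat\Z$, and since the image of $\widehat{\pi_1(N)}$ in $Out(\widehat{\pi_1(\Sigma')}/\widehat{L_i})$ factors through $\hat\Z$ and is therefore cyclic, the induced outer automorphisms $\varphi_i,\psi_i$ generate conjugate (indeed equal) cyclic subgroups. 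That is exactly the statement needed for the omnipotence contradiction, because $\varphi_i=q_i(\varphi)$ and $\psi_i=q_i(\psi)$ under the principal congruence map $q_i\colon Mod(\Sigma_{0,4})\to Out(\pi_1(\Sigma)/K_i)$, and CSP says every finite quotient of $Mod(\Sigma_{0,4})$ factors through some $q_i$. No claim about $\Phi_0$ is required.

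Two smaller divergences. For $|a|=|b|$ you invoke profinite invariance of volume via $L^2$-torsion; the paper instead reuses omnipotence (applied to the single element $\varphi$) together with CSP to produce a congruence quotient with $mn\mid o(q(\varphi))=o(q(\psi))$, whence $m=n$---more self-contained and avoiding any question about $L^2$-torsion in the cusped case. For the final root extraction your virtually-cyclic-centralizer remark and the paper's explicit Bass--Serre tree argument (the graph-of-finite-groups splitting of $Mod(\Sigma_{0,4})$ and the cyclic stabilizer of the common axis of $\xi\varphi\xi^{-1}$ and $\psi$) are the same observation in different clothing.
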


We complete the proof of Theorem~\ref{profiniterig} assuming Lemma~\ref{LiuplusBRW}.
\begin{proof}[Proof of Theorem~\ref{profiniterig}]
Let $\Sigma_{0,4}\hookrightarrow M\to S^1$ be a fixed fibration. By \cite{WZ1}, \cite{JZ}, and Theorem~\ref{liuthm2}, a 3-manifold $N$ with $\widehat{\pi_1(N)}\cong\widehat{\pi_1(M)}$ will be hyperbolic and will fiber over $S^1$ with fiber a surface $S$ with Thurston norm 2. Fix an identification $\Phi:\widehat{\pi_1(M)}\to\widehat{\pi_1(N)}$. By Theorem~\ref{liuthm2}, there is a connected fiber surface $\Sigma\hookrightarrow N$ with $\Phi(\widehat{\pi_1(\Sigma_{0,4}))}\cong\widehat{\pi_1(\Sigma)}$. By Theorem~\ref{toptype}, $\Sigma$ is also homeomorphic to $\Sigma_{0,4}$, and therefore by Lemma~\ref{LiuplusBRW} $M\cong N$. 
\end{proof}
We now prove Lemma~\ref{LiuplusBRW}.
\begin{proof}[Proof of Lemma~\ref{LiuplusBRW}]
   We denote the monodromies of $M$ and $N$ by $\varphi$ and $\psi$ respectively, and we will refer to the outer automorphisms of $F_3$ (the free group on three generators) induced by $\varphi,\psi$ by the same names. Let $\Sigma,\Sigma'$ be the corresponding aligned fiber surfaces (both homeomorphic to $\Sigma_{0,4}$) for $M$ and $N$ respectively. We have the following exact sequences 
  \[
   \begin{tikzcd}
  1 \arrow[r] &\pi_1(\Sigma)\arrow[r]\arrow[d] &\pi_1(M)\arrow[r]\arrow[d] &\Z\arrow[r]\arrow[d] & 1
    \\
    1 \arrow[r] &\widehat{\pi_1(\Sigma)}\arrow[r]\arrow[d, "\Phi|_{\widehat{\pi_1(\Sigma)}}"] &\widehat{\pi_1(M)}\arrow[r]\arrow[d, "\Phi"] &\widehat{\Z}\arrow[r]\arrow[d, "\cong"] & 1
    \\
    1 \arrow[r] &\widehat{\pi_1(\Sigma')}\arrow[r] &\widehat{\pi_1(N)}\arrow[r] &\widehat{\Z}\arrow[r] & 1
    \\
    1 \arrow[r] &\pi_1(\Sigma')\arrow[r]\arrow[u] &\pi_1(N)\arrow[r]\arrow[u] &\Z\arrow[r]\arrow[u] & 1
    \\
 \end{tikzcd}
\]
where all the unlabelled vertical arrows are canonical inclusions of groups into their profinite completions. 
\medbreak The isomorphism $\Phi$ embeds $\pi_1(M)$ and $\pi_1(\Sigma)$ in $\widehat{\pi_1(N)}$ as dense subgroups of $\widehat{\pi_1(N)}$ and $\widehat{\pi_1(\Sigma')}$ respectively. We consider $\Phi(\pi_1(\Sigma))<\Phi(\pi_1(M))$. Let $K_i$ be the intersection of all subgroups of $\Phi(\pi_1(\Sigma))$ of index $\leq i$. The tower $\{K_i\,|\,i\in\mathbb{N}\}$ is a cofinal tower of characteristic subgroups of $\Phi(\pi_1(\Sigma))$. Set $L_i=\widehat{K_i}\cap\pi_1(\Sigma')$ to obtain a corresponding characteristic tower of $\pi_1(\Sigma')$. Conjugation by elements of $\Phi(\pi_1(M))$ on $\Phi(\pi_1(\Sigma))$ induces outer automorphisms in $Out(\Phi(\pi_1(\Sigma))/K_i)\cong Out(\widehat{\pi_1(\Sigma')}/\widehat{L_i})$ for all $i$. Similarly, conjugation by elements of $\pi_1(N))$ on $\pi_1(\Sigma')$ induces outer automorphisms in $Out(\pi_1(\Sigma')/L_i)\cong Out(\widehat{\pi_1(\Sigma')}/\widehat{L_i})$ for all $i$. 
\medbreak The monodromy $\varphi$ is induced for the extension $$1\to \Phi(\pi_1(\Sigma))\to \Phi(\pi_1(M))\to \Z\to 1$$ by the conjugation action of an element $t\in \Phi(\pi_1(M))$  where the rightmost $\Z$ is a dense subgroup of $\hat{\Z}$ (and $t$ maps to a generator of this $\hat{\Z}$ under the fixed profinite epimorphism $\widehat{\pi_1(N)}\twoheadrightarrow\hat{\Z}$). Likewise, the monodromy $\psi$ is induced by the conjugation action of an element $s\in\pi_1(N)<\widehat{\pi_1(N)}$ that maps to a generator of $\Z$. Since the images of $t$ and $s$ under the fixed $\widehat{\pi_1(N)}\twoheadrightarrow\hat{\Z}$ both topologically generate $\hat{\Z}$, the actions of $t$ and $s$ (and therefore the actions of $\varphi$ and $\psi$) induce outer automorphisms $\varphi_i\in Out(\Phi(\pi_1(\Sigma))/K_i)\cong Out(\widehat{\pi_1(\Sigma')}/\widehat{L_i})$ and $\psi_i\in Out(\pi_1(\Sigma')/L_i)\cong Out(\widehat{\pi_1(\Sigma')}/\widehat{L_i})$ such that $\varphi_i$ and $\psi_i$ generate conjugate cyclic subgroups $C_i< Out(\widehat{\pi_1(\Sigma')}/\widehat{L_i})$.
\medbreak Assume, for sake of contradiction, that $\varphi$ and $\psi$ are independent in $Mod(\Sigma_{0,4})$, i.e. that $\varphi$ and $\psi$ do not have conjugate powers. Since $Mod(\Sigma_{0,4})$ is omnipotent there is a finite quotient $q:Mod(\Sigma_{0,4})\twoheadrightarrow Q$ for which $o(q(\varphi))\ne o(q(\psi))$. Because $Mod(\Sigma_{0,4})$ has the Congruence Subgroup Property, $q$ factors through a principal congruence homomorphism $$q_i:Mod(\Sigma_{0,4})\to Out(\pi_1(\Sigma)/K_i)$$ for some $i\in\mathbb{N}$. By construction, $\varphi_i=q_i(\varphi)$ and $\psi_i=q_i(\psi)$. By the previous paragraph, $\varphi_i$ and $\psi_i$ generate conjugate cyclic subgroups of $Out(\Phi(\pi_1(\Sigma)/K_i)$, and this contradicts the claim that $o(q(\varphi))\ne o(q(\psi))$. Thus, $\varphi$ and $\psi$ are not independent. In particular, there are positive integers $m,n$ such that $\varphi^m$ and $\psi^{\pm n}$ are conjugate. 
\medbreak Following \cite{BRW}, apply omnipotence and CSP to the cyclic subgroup of $Mod(\Sigma_{0,4})$ generated by $\varphi$ to show that there is a congruence quotient $q:Mod(\Sigma_{0,4})\to Out(\pi_1(\Sigma)/{K_i})$ for which $mn$ divides $o(q(\varphi))=o(q(\psi))$. Since $$\frac{o(q(\varphi))}{m}=o(q(\varphi^m))=o(q(\psi^n))=\frac{o(q(\psi))}{n}$$
it follows that $m=n$. Since $\varphi,\psi$ are pseudo-Anosovs in $Mod(\Sigma_{0,4})$ with a common conjugate power, there is a mapping class $\xi$ in $Mod(\Sigma_{0,4})$ with $(\xi\varphi\xi^{-1})^m=\xi\varphi^m\xi^{-1}=\psi^m$. Since $Mod(\Sigma_{0,4})$ is a (non-free) virtually free group, it splits as a finite graph of groups with finite vertex and edge groups (Theorem 1 \cite{karrass_pietrowski_solitar_1973}) and so $Mod(\Sigma_{0,4})$ acts on a simplicial tree $T$. The elements $\xi\varphi\xi^{-1}$ and $\psi$ fix a common bi-infinite geodesic $\alpha$ in $T$. The stabilizer of $\alpha$ is cyclic (since all torsion in $Mod(\Sigma_{0,4})$ fixes a vertex in $T$) and so $\xi\varphi\xi^{-1}=\psi^{\pm}$. 

\end{proof}

\begin{comment}
\begin{theorem}\label{pureprofiniterig}
Let $M$ be a hyperbolic 4-punctured $S^2$-bundle over $S^1$ with monodromy in $PMod(\Sigma_{0,4})$, then $\pi_1(M)$ is profinitely rigid among 3-manifold groups. 
\begin{proof}
For $N$ a 3-manifold with $\widehat{\pi_1(N)}\cong\widehat{\pi_1(M)}$, we can apply Theorem 1.3 \cite{Y} and Theorem 9.3 \cite{WZ1} to see that $N$ is a hyperbolic 3-manifold that fibers over $S^1$ with fiber a surface $S$ of complexity 1. In particular, $S$ is either the four-times punctured sphere or the twice-punctured torus. Since the pA monodromy of $M$ fixes every puncture of $\Sigma_{0,4}$, it follows that $M$ is a 4-cusped hyperbolic 3-manifold. By Theorem 9.3 \cite{WZ1} and Theorem 1.1 \cite{Chagas2016Hyperbolic3G}, $N$ is 4-cusped as well, and so $S$, a fiber surface for a fibration of $N$ over $S^1$, cannot be the twice-punctured torus. Thus, $S=\Sigma_{0,4}$, and we can use Lemma~\ref{LiuplusBRW} to conclude that $N$ is homeomorphic to $M$.
\end{proof}
\end{theorem}
\end{comment}

\section{Theorem~\ref{csp+omni} and further observations}
This program for proving Theorem~\ref{profiniterig} works to show that when $Mod(\Sigma)$ is omnipotent and has the Congruence Subgroup Property, hyperbolic $\Sigma$-bundles over $S^1$ whose fundamental groups have isomorphic profinite completions are cyclically commensurable. For the proof of Lemma~\ref{LiuplusBRW}, it was crucial that two pseudo-Anosovs in $Mod(\Sigma_{0,4})$ have a common power (up to conjugacy) if and only if the two pseudo-Anosovs are conjugate. In general, however, this is not true (see \cite{rootsModS} for example). 
\begin{proof}[Proof of Theorem~\ref{csp+omni}]
Let $\Sigma$ be a finite-type surface with negative Euler characteristic for which $Mod(\Sigma)$ is omnipotent and has the Congruence Subgroup Property. By Theorem~\ref{toptype} and \cite{WZ1}, any 3-manifold $N$ with the same profinite completion as a hyperbolic $\Sigma$-bundle $M$ over $S^1$ is a hyperbolic $\Sigma$-bundle over $S^1$. An analog of Lemma~\ref{LiuplusBRW} shows that the corresponding monodromies have a common conjugate power. This common conjugate power is the monodromy of a common finite cyclic cover of  $M$ and $N$. Thus $M$ and $N$ are cyclically commensurable.
\end{proof}

In their work \cite{BHMS} on hierarchically hyperbolic quotients of mapping class groups, Behrstock-Hagen-Sisto-Martin prove
\begin{theorem}[Theorem 8, \cite{BHMS}]\label{twoomnipotence}
    Let $S$ be a connected orientable surface of finite type of complexity at least 2. If all hyperbolic groups are residually finite, then the following holds. Let $g,h\in Mod(S)$ be pseudo-Anosovs with no common proper power, and let $q\in\Q$. Then there exists a finite group $G$ and a homomorphism $\phi:Mod(S)\to G$ such that $ord(\phi(g))/ord(\phi(h))=q$. 
\end{theorem}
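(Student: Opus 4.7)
The plan is to follow the BHMS strategy of constructing a hyperbolic quotient of $Mod(S)$ in which $g$ and $h$ have prescribed orders, and then invoking the standing hypothesis that hyperbolic groups are residually finite. Write $q = a/b$ in lowest terms with $a,b$ positive integers (the case $q \leq 0$ being vacuous since orders in a finite group are positive); the target is a finite quotient $\phi \colon Mod(S) \to G$ with $ord(\phi(g)) = Na$ and $ord(\phi(h)) = Nb$ for some integer $N \geq 1$, so that the ratio equals $q$.

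First, exploit the acylindrical action of $Mod(S)$ on the curve graph $\calC(S)$, for which pseudo-Anosovs are WPD loxodromic elements. The hypothesis that $g$ and $h$ have no common proper power means that their maximal elementary subgroups $E(g), E(h)$ are independent. Applying the Dahmani--Guirardel--Osin rotating families construction to this action, for every sufficiently large $N$ one obtains a quotient $\pi \colon Mod(S) \to Q$ killing $g^{Na}$ and $h^{Nb}$, in which $\pi(g)$ and $\pi(h)$ have orders exactly $Na$ and $Nb$ respectively; independence of $E(g), E(h)$ together with the large-$N$ hypothesis prevents these orders from collapsing to proper divisors. To upgrade $Q$ from acylindrically hyperbolic to genuinely hyperbolic, apply the hierarchically hyperbolic Dehn-filling machinery of BHMS: simultaneously kill large powers of a sufficiently rich family of Dehn twists along with $g^{Na}$ and $h^{Nb}$, so that all proper subsurface projections are collapsed and the resulting HHS reduces to its top-level curve graph, yielding a hyperbolic group.

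Now apply the hypothesis that all hyperbolic groups are residually finite to $Q$. For each prime $p \mid Na$ there exists a finite quotient of $Q$ in which $\pi(g)^{Na/p}$ maps to a non-identity element; similarly for each prime dividing $Nb$. Taking a common refinement produces $\psi \colon Q \twoheadrightarrow G$ such that $(\psi\pi(g))^{Na/p} \neq 1$ for every prime $p \mid Na$ and analogously for $h$. Since $(\psi\pi(g))^{Na} = 1$, its order divides $Na$; combined with the previous non-vanishing this forces $ord(\psi\pi(g)) = Na$ exactly, and the same argument yields $ord(\psi\pi(h)) = Nb$. The composition $\phi = \psi \circ \pi$ then delivers the required homomorphism with ratio $a/b = q$.

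The main obstacle is the hyperbolic-quotient construction. Plain DGO rotating families deliver only acylindrical hyperbolicity of $Q$, which is insufficient since the residual-finiteness hypothesis in force speaks only of hyperbolic groups. The essential content required from BHMS is the HHS Dehn-filling machinery that forces $Q$ itself to be hyperbolic, together with a careful management of filling parameters so that hyperbolicity of the quotient coexists with preservation of the exact orders $Na$ and $Nb$. The no-common-proper-power hypothesis is indispensable here: without it, a conjugate of $g^m$ would equal $h^n$ in $Mod(S)$ for some $m,n$, and the normal closure $\langle\langle g^{Na}, h^{Nb}\rangle\rangle$ would force a relation between $\pi(g)$ and $\pi(h)$ that obstructs the free prescription of the ratio $a/b$.
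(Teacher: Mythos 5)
This statement is imported verbatim from Behrstock--Hagen--Martin--Sisto \cite{BHMS}; the paper under review cites it as ``Theorem 8, \cite{BHMS}'' and offers no proof of its own. There is therefore no internal argument against which to check you.

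On its own merits, your sketch is a reasonable reconstruction of the BHMS strategy, and you have correctly located where the real content sits. The reduction at the end (write $q=a/b$, target orders $Na$ and $Nb$, and then check survival only over primes dividing $Na$ and $Nb$ to pin the orders exactly) is sound. You are also right that plain Dahmani--Guirardel--Osin rotating families give only an acylindrically hyperbolic quotient, and that this is not enough, since the standing hypothesis concerns residual finiteness of \emph{hyperbolic} groups only; the technical heart of BHMS is precisely the construction of genuinely hyperbolic quotients of $\mathrm{Mod}(S)$ in which prescribed large powers of a finite family of pseudo-Anosovs die and no unintended collapse of their orders occurs. Your description of that step --- ``simultaneously kill large powers of a sufficiently rich family of Dehn twists along with $g^{Na}$ and $h^{Nb}$'' --- is a fair gloss but elides the actual HHS Dehn-filling machinery (and the verification that the quotient HHS has only one non-collapsed domain, hence is hyperbolic), which is where nearly all of the work in the cited paper lives. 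The role of the no-common-proper-power hypothesis is also stated a bit loosely: what it buys is independence of the elementary closures $E(g)$ and $E(h)$, which is what the small-cancellation/rotating-family step needs to keep $\pi(g)^{Na}$ and $\pi(h)^{Nb}$ from collapsing to shorter orders; it is not merely a device for avoiding a relation between $\pi(g)$ and $\pi(h)$. These are omissions of detail rather than errors, and as a blind reconstruction of a theorem the paper does not reprove, this is a satisfactory account.
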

This theorem allows us to prove Theorem~\ref{bhms}, reducing the question of commensurability of profinitely equivalent $\Sigma-$bundles over $S^1$ when $Mod(\Sigma)$ has CSP to the residual finiteness of all hyperbolic groups.   

\begin{proof}[Proof of Theorem~\ref{bhms}]
Let $\Sigma$ be a finite-type surface with negative Euler characteristic for which has the Congruence Subgroup Property. By Theorem~\ref{toptype} and \cite{WZ1}, any 3-manifold $N$ with the same profinite completion as a hyperbolic $\Sigma$-bundle $M$ over $S^1$ is a hyperbolic $\Sigma$-bundle over $S^1$. If all hyperbolic groups are residually finite, then Theorem~\ref{twoomnipotence} above can be combined with Lemma~\ref{LiuplusBRW} to show that the corresponding monodromies will have a common conjugate power. This common conjugate power is the monodromy of a common degree finite cyclic cover of  $M$ and $N$. Thus $M$ and $N$ are cyclically commensurable manifolds with the same volume.
\end{proof}

%%%%%%%%%%%%%%%%%%%%%%%%%%%%%%%%%%%%%%%%%%%%%%%%%%%%
%%%%%%%%%%%%%%%%%%%%%%%%%%%%%%%%%%%%%%%%%%%%%%%%%%%%
%%%%%%%%%%%%%%%%%%                                           %%%%%%%%%%%%%%%%%
%%%%%%%%%%%%%%%%%%		Bibliography		%%%%%%%%%%%%%%%%%
%%%%%%%%%%%%%%%%%%                                           %%%%%%%%%%%%%%%%%
%%%%%%%%%%%%%%%%%%%%%%%%%%%%%%%%%%%%%%%%%%%%%%%%%%%%
%%%%%%%%%%%%%%%%%%%%%%%%%%%%%%%%%%%%%%%%%%%%%%%%%%%%

\bibliography{main}

\end{document}